\newcolumntype{C}[1]{>{\centering\let\newline\\\arraybackslash\hspace{0pt}}m{#1}}
\theoremstyle{plain}
\newtheorem{theorem}{Theorem}[section]
\newtheorem{corollary}[theorem]{Corollary}
\newtheorem{lemma}[theorem]{Lemma}
\newtheorem{alg}{Algorithm}
\theoremstyle{definition}
\theoremstyle{remark}
\newcommand{\Real}[0]{\operatorname{Re}}
\newcommand{\Imag}[0]{\operatorname{Im}}
\newcommand{\diag }[0]{\operatorname{diag }}
\newcommand{\card}[0]{\operatorname{card}}
\begin{document}



\title{Optimization of Power Network Frequency Control via Analytic Bounds on the Objective Function}

\author{
O. O. Khamisov\\
Center for Energy Systems,\\ Skolkovo Institute of Science and Technology,\\ Moscow, Russia
}
\date{}
\maketitle

{\bf Keywords:}
power network; eigenvalues; linear differential equations; frequency control; d.c. optimization; Hooke and Jeeves method.
 

\begin{abstract}
Frequency control in power networks is designed to maintain power balance by adjusting generation that allows to keep frequency at its nominal value (i.e. 50 Hz). If power disturbance occurs, it leads to frequency oscillations that are consequently suppressed by the control. Contemporary control schemes depend on a number of parameters. They can be adjusted to increase control efficiency. The main factor that defines control efficiency is maximal frequency deviation (nadir). However corresponding objective function (value of the nadir) is non-smooth and has multiple local extremums, thus it is difficult to optimize. The aim of this work is to present an analytic conservative estimate of the nadir and then optimize the estimate instead of optimizing nadir, since in practice global minimums of both functions coincide. Nadir and its estimate are non-smooth, therefore standard zero order method is applied. Numerical experiments show that the estimate has unique minimum while nadir has a set of local minimums. Therefore optimization of the estimate, unlike optimization of the nadir, not only computationally easy, but also returns desired global optimum. Such optimization allows to reduce nadir up to two times compared to the nadir with default control parameters. Moreover it allows to limit frequency oscillations in order to reduce wear of the equipment.
\end{abstract}

\section{Introduction}

Power networks are susceptible to power imbalances due to changes in power demand. Additionally generator failure may result in significant disturbance in power balance. As a result, electrical power drained from the generators exceeds mechanical power supply and generators start to slow down. Consequently frequency of the power network drops \cite{BV}, \cite{MBB}, \cite{WW}. Large frequency oscillations may result in equipment damage or its emergency shutdown. In order to counter this effects frequency control is used. It adjusts power generation in order to restore power balance and deliver frequency to its nominal value (50 Hz or 60 Hz).

It is known \cite{MBB}, that maximal frequency oscillation (nadir) happens during first 30 seconds after power disturbance appearance. Currently used frequency control utilises proportional controller called Droop control (or Primary Frequency control) to reduce this initial frequency drop. Droop control works at timescale of tens of seconds. During this period the system is most vulnerable due to frequency oscillations. There exist various other versions of frequency control schemes \cite{ZML}-\cite{ZMLB} however they are not implemented in power systems, therefore they are not considered within this paper.

Droop control has a set of control parameters, which are currently chosen to ensure system's stability. However it is possible to adjust these parameters to reduce frequency oscillations without loosing stability of the system. There is no agreed way to describe system's behavior with a set of particular parameters without simulations. In particular maximal frequency deviations (nadir) from the nominal value are the key factors, that influence system's reliability. Aim of the paper is development of approach, that would minimize maximal absolute value of frequency oscillations (nadir) without loss of system's stability.

Classical linearized model of transmission power network is considered. It is assumed that several buses suffer from a step change of power generation or consumption, which results in frequency oscillations. As a result dynamics of the network as defined by a system of linear differential equations
\begin{equation}\label{wdefwef}
  \dot x=A(r)x+P^D.
\end{equation}
Here vector $x(t)$ includes subvector of bus frequency deviations $\omega(t)$. It is required to find vector of droop control parameters $r=(r_1,\dots,r_n)^T$, that would minimize nadir $F(r)=\max_{l=\{1,n\}}\max_{t\ge0}|\omega_l(t)|$ for all buses $l\in N$, where $N$ is the set of buses. Primary frequency control, considered in this paper stabilizes frequency at equilibrium during the first several tens of seconds \cite{KN}. Therefore in numerical experiments maximization is done for the first $100$ seconds. Vector-function $\omega(t)$ is a part of the solution of linear differential equations, therefore it is given by oscillatory functions with infinite number of local extremum. Calculation of the nadir is done by d.c. approach which is computationally expensive. Nadir as a function of control parameters is non-smooth, therefore its optimization is done by Hooke and Jeevse zero order method. Finally nadir function $F(r)$ has multiple local extremum, therefore it is not possible to guarantee that global optimum will be found by such approach.

Alternatively, analytical estimate for nadir is derived in order to improve optimization efficiency and exclude local extremums. Firstly we derive frequency majorants $\mathcal{M}_l(t)\ge\omega_l(t)$ for each bus $l$ that in practice have unique optimum, therefore their maximum $G(r)=\max_{l=\{1,n\}}\max_{t\ge0}\mathcal{M}_l(t)\ge F(r)$ (estimation of the nadir) can be found via golden section method, which is computationally much faster, then d.c. optimization applied to calculate $F(r)$. Function $G(r)$ is non-smooth and is optimized by Hooke and Jeeves method. However in practice it has unique minimum which coincides with global minimum of $F(r)$. As a result optimization of $G(r)$ is computationally more efficient than optimization of $F(r)$, moreover optimization of $G(r)$ delivers global maximum for both $G(r)$ and $F(r)$, since $G(r)$ does not have local extremums.

Numerical experiments are done in order to compare efficiency of estimate minimization with direct optimization of nadir.

The article is organized in the following way. In section \ref{pr_st} network model and optimization aim are described.
In section \ref{maj_freq} estimate of the nadir is derived.
In section \ref{mn} optimization of both the nadir and its estimate are described.
In section \ref{num_res} results of numerical experiments are presented.
In section \ref{Conc} final observations and directions of the future work are discussed.
\section{Problem Statement}\label{pr_st}
\subsection{Notations}
Let $i$ be imaginary unit.
Let $\mathbb{R}$ be set of real numbers, cardinality of a finite set $S$ is defined as $\card(S)$.
For an arbitrary matrix (vector) $X$ its transpose is denoted by $X^T$.
For an arbitrary vector $x\in\mathbb{R}^n$ and set $I\subseteq\{1,\dots,n\}$ we define subvector $x_I$.
For an arbitrary matrix $X\in\mathbb{R}^{n\times}$ and set $I\subseteq\{1,\dots,n\}$ we define row submatrix $X_I$.
Let $\textbf{0}$ be zero matrix of the corresponding size, $I$ is identity matrix of the corresponding size, $0$ is zero vector of the corresponding size, $\rho$ is vector of ones of the corresponding size.
For vector $x=(x_1,\dots,x_n)$ we denote by $\diag(x)=\diag(x_1,\dots,x_n)$ diagonal matrix with elements $x_l,\;l\in\{1,\dots,n\}$.
Operations $\Real\,X$, $\Imag\,X$, $|X|$ are considered to be elementwise, if $X$ is a vector or a matrix.

\subsection{Model description}
Classical generator model \cite{BH}, \cite{WWW} is used. The power transmission network \cite{BV} is described by a directed graph $(N,E)$,  where $N$ is the set of $n$ buses defined by their indices: $N=\{1,\dots,n\}$, $E = \{(l,j),\;l,j\in N$ is set of $m$ lines. It is assumed, that the network is connected.

Dynamics of the power transmission network is defined by the system of linear differential equations \cite{ZML}-\cite{ZMLB}. Kron reduction \cite{K}-\cite{DB} is applied in order to exclude load buses, therefore system has the following form:
\begin{subequations}\label{msys}
\begin{align}
m_l\dot\omega_l=&-d_l\omega_l+\sum_{j:(j,l)\in E}p_{ji}-\sum_{j:(l,j)\in E}p_{lj}-\nonumber\\
&-p^M_l+p^D_l,\;\omega_l(0)=0,\;l\in N,\label{msys1}\\
\dot p_{lj}=&b_{lj}(\omega_l-\omega_j),\;p_{lj}(0)=0,\;(l,j)\in E,\label{msys2}\\
t^G_l\dot p^M_l=&-p_l^M+v_l,\;p^M_l(0)=0,\;l\in N,\label{msys3}\\
t^B_l\dot\psi_l=&-\psi_l-r_l\omega_l,\;\psi_l(0)=0,\;l\in N.\label{msys4}
\end{align}
\end{subequations}
Variables of the system have the following meanings:
\begin{itemize}
  \item $\omega_l,\;l\in N$ are deviations of bus frequencies from nominal value,
  \item $p_{lj},\;(l, j)\in E$ are line active power flows,
  \item $p^M_l,\;l\in N$ are mechanic power injections at generators,
  \item $\psi_l,\;l\in N$ are positions of valves,
\end{itemize}
Parameters of the system:
\begin{itemize}
    \item $m_l>0,\;l\in N$ are generators inertia constants,
    \item $d_l>0,\;l\in N$ are steam and mechanical damping of generators and frequency-dependent loads,
    \item $p^d_l,\;l\in N$ are unknown disturbances (assumed constant),
    \item $b_{lj}>0,\;(l, j)\in E$ are line parameters that depend on line susceptances, voltage magnitudes and reference phase angles,
    \item $t^G_l>0,\;l\in N$ are time constants that characterize time delay in fluid dynamics in the turbine,
    \item $t^B_l>0,\;l\in N$ are time constants that characterize time delay in governor response.
    \item $r_l>0,\;l\in N$ are control parameters.
\end{itemize}

Equations of the system:
\begin{itemize}
    \item Equations (\ref{msys1}) are generator swing equations,
    \item Equations (\ref{msys2}) are equations of direct current linearized power flows,
    \item Equations (\ref{msys3}) are turbine dynamics,
    \item Equations (\ref{msys4}) are governor dynamics.
\end{itemize}
Control parameters $r$ are chosen non-negative in order to ensure system's stability. It is possible to adjust this parameters in order to reduce maximal frequency deviations even further and still keep system stable.


\subsection{Matrix Representation}\label{mat_rep}
In order to introduce nadir estimate the system (\ref{msys}) is presented in the following form:
\begin{equation}\label{meq}\dot x=A(r)x+P^D,\;x(0)=0,\end{equation}
$$A(r)=\left(\begin{array}{cccc}
    -MD & -MC & M & \textbf{0} \\
    BC^T   & \textbf{0} & \textbf{0} & \textbf{0} \\
    \textbf{0} & \textbf{0} & -T^B & T^B \\
    -T^VR & \textbf{0} & \textbf{0} & -T^V \\
\end{array}\right),\;x=\left(\begin{array}{c}\omega(t)\\p(t)\\p^M(t)\\v(t)\end{array}\right),\;P^D=\left(\begin{array}{c}Mp^D\\0\\0\\0\end{array}\right).$$

Here $\omega(t)=(\omega_1(t),\dots,\omega_n(t))^T$, $p(t)=(p_1(t),\dots,p_m(t))^T$, $p^M(t)=(p^M_1(t),\dots,p^M_n(t))^T$, $v(t)=(v_1(t),\dots,v_n(t)$, $D=\diag (d_1,\dots,d_n)\succ0$, $M=\diag \left(1/m_1,\dots,1/m_n\right)\succ0$, $B=\diag (1/b_1,\dots,1/b_m)\succ0$, $T^G=\diag (1/t^G_1,\dots,1/t^G_n)\succ0$, $T^V=\diag (1/t^V_1,\dots,1/t^V_n)\succ0$, $R=\diag (r_1,\dots,r_n)\succ0$, $C\in\mathbb{R}^{n\times m}$ is the incidence matrix \cite{BRB} of the system graph $G$, $p^D=(p^D_1,\dots,p^D_n)^T$ is the inhomogeneity vector of disturbances.

\subsection{Optimization aim}
In the models, describing energy systems, matrix $A$ is diagonalizable \cite{IEEE}-\cite{PST}. We are using this observation in the further results.
Let 
\begin{equation}
    A(r) = S(r)\Lambda(r)S^{-1}(r)
\end{equation}
be its eigenvalues decomposition, where $\Lambda(r)$ diagonal matrix of eigenvalues that has the following form
$$\overline\Lambda(r) =\left(
                     \begin{array}{cc}
                        \Lambda^1(r) & \textbf{0} \\
                       \textbf{0} &  \textbf{0} \\
                     \end{array}
                   \right),$$
where $\Lambda^1$ is diagonal matrix of nonzero eigenvalues.
Let us define nadir as a function of control parameters:
\begin{equation}\label{ndf}
  F(r)=\max_{l\in N}\max_{t\in[0,t_1]}|\omega_l(t)|.
\end{equation}
Nadir minimization problem  has the following form:
\begin{subequations}
\begin{equation}\label{obj}
    \min_{r\ge0}F(r),
\end{equation}
s.t.
\begin{equation}\label{stab}
    \max\Re_{i\in N}\lambda_i(r)\le0,
\end{equation}
\begin{equation}\label{oc_con}
  \max_{l\in N}\left|\frac{\Real\,\lambda_l(r)}{\Imag\,\lambda_l(r)}\right|\ge\xi,\;\xi>0.
\end{equation}
\end{subequations}
where $\omega(t)$ is part of the solution of the system (\ref{msys}), that depends on $r$. Primary frequency control, considered in this paper stabilizes frequency at equilibrium during the first several tens of seconds \cite{KN}. Therefore in the experiments $t_1$ is taken equal to $100$ seconds. Constraint (\ref{stab}) is used to ensure system's stability, constraint (\ref{oc_con}) is used to suppress small frequency oscillations. Its effects are discussed in the numerical results section.

Functions $\omega_l(t)$ are oscillatory and have infinite amount of extremums as a solution of system of linear differential equations. In addition function $\max_{t\in[0,t_1]}|\omega_l(t)|$ is non-smooth, therefore calculation of any value of $F(r)$ is computationally difficult. In addition, as it is shown in numerical results section, function $F(r)$ has local minimums. Thus approximating nadir with  function $G(r)\ge F(r)$ of simpler structure is done. Then problem
\begin{equation}\label{objr}
\min_{r\ge0}G(r),
\end{equation}
is considered instead of (\ref{obj}).

\section{Estimate of the nadir}\label{maj_freq}
Firstly let us show that there always there always exists set of control parameters $R$ for which frequency deviations $\omega(t)$ converge to a constant value.
\begin{theorem}
Let $R=\textbf{0}$. System (\ref{msys}) is asymptotically stable over $\omega$.
\end{theorem}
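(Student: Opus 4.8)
The plan is to exploit the triangular structure that $A(r)$ acquires at $r=0$ and then reduce the frequency dynamics to a classical damped second–order system whose only non–decaying mode is the spatially uniform one. First I would set $R=\textbf{0}$ and observe that the governor and turbine states decouple from the rest and vanish identically: for $r_l=0$ equation (\ref{msys4}) reads $t^B_l\dot\psi_l=-\psi_l$, $\psi_l(0)=0$, hence $\psi_l\equiv0$, and then (\ref{msys3}) gives $t^G_l\dot p^M_l=-p^M_l$, $p^M_l(0)=0$, so $p^M\equiv0$; equivalently, $A(\textbf{0})$ is block upper–triangular with lower–right block $\left(\begin{smallmatrix}-T^B & T^B\\ \textbf{0} & -T^V\end{smallmatrix}\right)$, itself upper–triangular with negative–definite diagonal blocks, so this portion of the state — driven by zero from a zero initial condition — is identically zero. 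What remains is the swing / DC power–flow subsystem (\ref{msys1})–(\ref{msys2}) with constant forcing $p^D$ and $\omega(0)=p(0)=0$; differentiating (\ref{msys1}) once and substituting (\ref{msys2}) eliminates $p$, and since $p^D$ is constant the forcing drops out, so $\omega$ solves
\begin{equation}\label{secord}
M^{-1}\ddot\omega+D\dot\omega+L\,\omega=0,\qquad\omega(0)=0,\quad\dot\omega(0)=Mp^D,
\end{equation}
where $L:=CBC^T\succeq0$ is the weighted Laplacian of the connected graph $(N,E)$, so $\ker L=\operatorname{span}(\rho)$ and $L$ is positive definite on $\rho^{\perp}$.

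The remaining work is a modal analysis of (\ref{secord}). If $\lambda$ is a characteristic root with eigenvector $v\neq0$, left–multiplying $(\lambda^{2}M^{-1}+\lambda D+L)v=0$ by $v^{*}$ gives $a\lambda^{2}+b\lambda+c=0$ with real $a=v^{*}M^{-1}v>0$, $b=v^{*}Dv>0$, $c=v^{*}Lv\ge0$; solving the quadratic shows $\Real\lambda<0$ unless $c=0$, i.e.\ unless $\lambda=0$ and $v\in\operatorname{span}(\rho)$. Hence every mode but the uniform one decays exponentially, and the only delicate point — which I expect to be the main obstacle — is that $\lambda=0$ must be a semisimple eigenvalue of multiplicity one, so that no secular (linear–in–$t$) term appears along $\rho$; this is where $D\succ0$ enters. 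I would obtain it from the fact that $q(\lambda):=\det(\lambda^{2}M^{-1}+\lambda D+L)$ has a simple zero at $\lambda=0$: $q(0)=\det L=0$, while by Jacobi's formula $q'(0)=\operatorname{tr}\!\bigl(\operatorname{adj}(L)\,D\bigr)=\kappa\,\rho^{T}D\rho>0$ because $\operatorname{adj}(L)=\kappa\,\rho\rho^{T}$ with $\kappa>0$ (a positive weighted spanning–tree count of the connected graph). An equivalent route is LaSalle's principle applied to $\mathcal E=\tfrac12\dot\omega^{T}M^{-1}\dot\omega+\tfrac12\omega^{T}L\omega$, for which $\dot{\mathcal E}=-\dot\omega^{T}D\dot\omega\le0$ and the largest invariant subset of $\{\dot\omega=0\}$ is the equilibrium line $\{\omega=c\rho\}$, with the transversal exponential contraction above upgrading set–convergence to convergence of $\omega(t)$ to a single point.

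Putting the pieces together, $\omega(t)=c\,\rho+\omega_{\mathrm{tr}}(t)$ with $\omega_{\mathrm{tr}}(t)\to0$ exponentially, so $\omega(t)$ converges to the constant vector $c\,\rho$; integrating $\tfrac{d}{dt}\bigl(\rho^{T}M^{-1}\dot\omega\bigr)=-\rho^{T}D\dot\omega$ further identifies $c=\rho^{T}p^{D}/(\rho^{T}D\rho)$. This is precisely the asserted asymptotic stability of system (\ref{msys}) over the frequency variable $\omega$.
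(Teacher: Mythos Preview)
Your proof is correct. Both you and the paper begin identically: at $R=\textbf{0}$ the matrix $A$ becomes block upper--triangular, so the governor and turbine states decouple and die out, leaving only the swing/power--flow block. The divergence is in how that remaining $(\omega,p)$ subsystem is handled. The paper stays in first--order form and applies Barbashin--Krasovskii--LaSalle directly with the quadratic storage $V(\omega,p)=\omega^{T}M^{-1}\omega+p^{T}B^{-1}p$, for which $\dot V=-2\omega^{T}D\omega$; one line then gives convergence of $\omega$. You instead eliminate $p$ by differentiation to obtain the second--order damped Laplacian system $M^{-1}\ddot\omega+D\dot\omega+L\omega=0$ and carry out a full spectral analysis, including a careful check via Jacobi's formula and the matrix--tree theorem that the zero eigenvalue is simple. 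Your route is longer but yields more: explicit exponential decay of the transverse modes and the closed--form limit $c=\rho^{T}p^{D}/(\rho^{T}D\rho)$. The paper's route is terser, and its Lyapunov function on $(\omega,p)$ sidesteps the semisimplicity issue entirely---LaSalle invariance absorbs that difficulty. Your alternative LaSalle argument on $(\omega,\dot\omega)$ with $\mathcal{E}=\tfrac12\dot\omega^{T}M^{-1}\dot\omega+\tfrac12\omega^{T}L\omega$ is the closest in spirit to what the paper actually does, just in different coordinates.
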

\begin{proof}
If $R=\textbf{0}$ then matrix of the system (\ref{msys}) as an upper triangular matrix:
\begin{equation}
A(r)=\left(\begin{array}{cccc}\cline{1-2}
    \multicolumn{1}{|c}{-MD} & \multicolumn{1}{c|}{-MC}& M & \textbf{0}\\
    \multicolumn{1}{|c}{BC^T} & \multicolumn{1}{c|}{\textbf{0}} & \textbf{0} & \textbf{0}\\\cline{1-3}
    \textbf{0} & \textbf{0} & \multicolumn{1}{|c|}{-T^G} & T^G\\\cline{3-4}
    \textbf{0} & \textbf{0} & \textbf{0} & \multicolumn{1}{|c|}{-T^V}\\\cline{4-4}
\end{array}\right).
\end{equation}
Variables $\psi$ do not depend on any other variables, and is defined by the equation
\begin{equation}
\dot\psi=-\psi,    
\end{equation}
thus is asymptotically stable. Similarly $p^m$ is asymptotically stable as it depends only on $\psi$.
Diagonal block for the remaining variables $\omega$ and $p$ has form
\begin{equation}
\left(\begin{array}{cc}
    -MD & MC\\
    -BC^T & \textbf{0}\\
\end{array}\right)
\end{equation}
and the corresponding homogeneous system is
\begin{subequations}
\begin{align}
    \dot\omega =& -MD\omega+MCp,\\
    \dot p =& BC^T.
\end{align}
\end{subequations}
Let us consider the following Lyapunov function:
\begin{equation}
V(\omega,p) = 
\left(\begin{array}{c}
    \omega\\
    p\\
\end{array}\right)^T
\left(\begin{array}{cc}
    M^{-1} & \textbf{0}\\
    \textbf{0} & B^{-1}\\
\end{array}\right)\left(\begin{array}{c}
    \omega\\
    p\\
\end{array}\right).
\end{equation}
Its derivative is given by
\begin{equation}
    \dot V(\omega,p) = -2\omega^TD\omega.
\end{equation}
That according Barbashin-Krasovskii-LaSalle theorem \cite{HC} proves asymptotic stability of $\omega$.
\end{proof}
Let us introduce the new eigenvalues matrix $$\overline\Lambda(r) =\left(
                     \begin{array}{cc}
                        \Lambda^1(r) & \textbf{0} \\
                       \textbf{0} &  -I \\
                     \end{array}
                   \right),$$
and new system matrix
$$\overline A(r) = S(r)\overline\Lambda(r) S^{-1}(r).$$
Then the following lemmas can be derived:
\begin{lemma}\label{l35}
Let
$$x(t)=\left(\begin{array}{c}\omega(t)\\z(t)\end{array}\right)$$
be solution of the system (\ref{meq}) and
$$\overline x(t)=\left(\begin{array}{c}\overline\omega(t)\\\overline z(t)\end{array}\right)$$
be solution of the system
\begin{equation}\label{replace}
    \dot {\overline x}=\overline A(r)x+P^D,\;\overline x(0)=0.
\end{equation}
then
$$\omega(t)=\overline\omega(t)$$
\end{lemma}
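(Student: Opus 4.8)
The plan is to write both trajectories with the variation-of-constants formula and then reduce the identity $\omega=\overline\omega$ to a purely algebraic statement about $\ker A(r)$. Since $x(0)=\overline x(0)=0$, the solutions of (\ref{meq}) and (\ref{replace}) are
\[
x(t)=\int_0^t e^{A(r)(t-s)}P^D\,ds,\qquad \overline x(t)=\int_0^t e^{\overline A(r)(t-s)}P^D\,ds .
\]
Partition the columns of $S(r)$ as $S=[\,S^{1}\ \ S^{0}\,]$, where $S^{1}$ collects the eigenvectors belonging to the nonzero eigenvalues $\Lambda^{1}(r)$ and $S^{0}$ those belonging to the zero block, and partition $S^{-1}(r)$ conformally into an upper row block $W^{1}$ and a lower row block $W^{0}$. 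Since $A(r)=S\Lambda S^{-1}$ and $\overline A(r)=S\overline\Lambda S^{-1}$ use the same $S$, one gets $e^{A(r)\tau}=S^{1}e^{\Lambda^{1}\tau}W^{1}+S^{0}W^{0}$ and $e^{\overline A(r)\tau}=S^{1}e^{\Lambda^{1}\tau}W^{1}+e^{-\tau}S^{0}W^{0}$, hence $e^{A(r)\tau}-e^{\overline A(r)\tau}=(1-e^{-\tau})\,\Pi$ with $\Pi:=S^{0}W^{0}$. Subtracting the two Duhamel integrals and using $\int_0^t(1-e^{-(t-s)})\,ds=t-1+e^{-t}$ gives
\[
x(t)-\overline x(t)=(t-1+e^{-t})\,\Pi P^D .
\]
So the lemma reduces to showing that the $\omega$-block (the first $n$ coordinates) of $\Pi P^D$ vanishes.

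For that step I would use that $\Pi=S^{0}W^{0}$ is the spectral projector onto the eigenvalue $0$; since $A(r)$ is diagonalizable, $\operatorname{range}\Pi=\ker A(r)$, so it suffices to prove that every $v\in\ker A(r)$ has zero $\omega$-component. Writing $v=(v_\omega,v_p,v_M,v_\psi)^{T}$ and reading the block rows of $A(r)v=0$ off the matrix in Section \ref{mat_rep}: the second block row gives $C^{T}v_\omega=0$, so $v_\omega=c\rho$ for a scalar $c$ by connectedness of the graph; the fourth and third block rows give $v_\psi=-Rv_\omega=-c\,r$ and $v_M=v_\psi$; and the first block row becomes $Cv_p=v_M-Dv_\omega=-c(r+d)$ with $d=D\rho=(d_1,\dots,d_n)^{T}$. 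Since $\operatorname{range}C=\{y:\rho^{T}y=0\}$ for a connected graph, solvability in $v_p$ forces $c\,\rho^{T}(r+d)=c\sum_{l\in N}(r_l+d_l)=0$, and $d_l>0$ (with $r_l\ge0$) gives $c=0$, i.e. $v_\omega=0$. Therefore $(\Pi P^D)_\omega=0$, and the displayed difference yields $\omega(t)=\overline\omega(t)$ for every $t$.

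The technical heart — and essentially the only nontrivial point — is this last step: kernel vectors of $A(r)$ carry no frequency component. It is exactly here that the model structure enters, through the incidence-matrix facts $\ker C^{T}=\operatorname{span}\rho$ and $\operatorname{range}C=\rho^{\perp}$ (valid because the graph is connected) and the strict positivity of the damping constants $d_l$; the rest of the argument is routine bookkeeping with the matrix exponential. One small thing to keep track of is the diagonalizability of $A(r)$ — already assumed in the paper — which is what makes $S^{0}W^{0}$ a projector onto $\ker A(r)$ rather than onto a possibly larger generalized eigenspace for $0$.
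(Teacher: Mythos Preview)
Your proof is correct and follows essentially the same route as the paper: both compute $x(t)-\overline x(t)$ via the eigendecomposition, observe that only the zero-eigenvalue block survives (yielding the scalar factor $t-1+e^{-t}$), and conclude from $S^0_N=\mathbf{0}$ --- equivalently, $\ker A(r)$ has vanishing $\omega$-component --- that the first $n$ entries of the difference are zero. Your version is more explicit in \emph{deriving} that kernel fact from the block equations and the incidence-matrix properties, whereas the paper simply invokes the block form of $S(r)$ without justification in the main text.
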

\begin{proof}
Solution of (\ref{meq}) is given by
$$x(t) = \int_0^te^{A(r)(t-\tau)}P^Dd\tau = S(r)e^{\Lambda(r) t}\int_0^te^{-J\tau}d\tau S^{-1}(r)P^D =$$
$$= S(r)e^{\Lambda t}\left(
           \begin{array}{cc}
              (\Lambda^1(r))^{-1}(I-e^{-\Lambda^1(r)t}) & \textbf{0} \\
             \textbf{0} & It \\
           \end{array}
         \right)S^{-1}(r)P^D = S(r)\left(
           \begin{array}{cc}
              (\Lambda^1(r))^{-1}(e^{\Lambda^1(r)t}-I) & \textbf{0} \\
             \textbf{0} & It \\
           \end{array}
         \right)S^{-1}(r)P^D.$$
Similarly
$$\overline x(t) = S(r)\left(
           \begin{array}{cc}
              (\Lambda^1(r))^{-1}(e^{\Lambda^1(r)t}-I) & \textbf{0} \\
             \textbf{0} & I-e^{-t} \\
           \end{array}
         \right)S^{-1}(r)P^D.$$
Let us consider  their difference
$$x(t)-\overline x(t) = S(r)\left(
           \begin{array}{cc}
             \textbf{0} & \textbf{0} \\
             \textbf{0} & It-I+e^{-t} \\
           \end{array}
         \right)S^{-1}(r)P^D =$$
$$=\left(
           \begin{array}{cc}
             S^1_N(r) & \textbf{0} \\
             S_Z^1(r) & S_Z^0(r) \\
           \end{array}
         \right)\left(
           \begin{array}{cc}
             \textbf{0} & \textbf{0} \\
             \textbf{0} & It-I+e^{-t} \\
           \end{array}
         \right)S^{-1}(r)P^D =$$
$$=\left(
           \begin{array}{cc}
             \textbf{0} & \textbf{0} \\
             \textbf{0} & S_Z^0(r)(It-I+e^{-t}) \\
           \end{array}
         \right)S^{-1}(r)P^D.$$
First $n$ elements of this vector, corresponding to $\omega(t)-\overline\omega(t)$ are zero.
\end{proof}

\begin{corollary}
Vector function $\omega(t)$ has the following form:
$$\omega(t)=Y_N(r)e^{\overline\Lambda(r)\rho t}-\omega^*,$$
where
$$Y=S(r)\overline\Lambda(r)\diag (S^{-1}(r)P^D)$$
\end{corollary}
\begin{proof}
From (\ref{replace})
$$\overline x(t) = \int_0^te^{\overline A(r)(t-\tau)}P^Dd\tau=S(r)\overline\Lambda^{-1}(r)e^{\overline\Lambda(r)t}S^{-1}(r)P^D-\overline A^{-1}(r)p^D = $$
$$ = S(r)\overline\Lambda^{-1}(r)\diag (S^{-1}(r)p^D)e^{\overline\Lambda(r)\rho t}-\overline A^{-1}(r)P^D.$$
Since $(r)\lim_{t\rightarrow\infty}(S\overline\Lambda^{-1}(r)\diag (S^{-1}(r)P^D))_N e^{\overline\Lambda(r)\rho t}=0$ we have $(\overline A^{-1}(r)P^D)_N=\omega^*$. As is shown in the previous lemma, $\omega(t)$ does not depend on the bottom right block of $\Lambda(r)$, therefore $$(S(r)\overline\Lambda^{-1}(r)\diag (S^{-1}(r)P^D))_N=(S(r)\overline\Lambda(r)\diag (S^{-1}(r)P^D))_N.$$ Combining this facts we have
$$\omega(t)=\overline\omega(t)=$$$$=(S\overline\Lambda^{-1}(r)\diag (S^{-1}(r)P^D))_N e^{\overline\Lambda(r)\rho t}-(\overline A^{-1(r)}P^D)_N=(S(r)\overline\Lambda(r)\diag (S^{-1}(r)P^D))_N e^{\overline\Lambda(r)\rho t}-(\overline A^{-1}(r)P^D)_N=$$$$=Y_N(r) e^{\overline\Lambda(r)\rho t}-\omega^*.$$
\end{proof}

\begin{theorem}\label{t41}
For the frequency deviation $\omega(t)$ the following estimation exists:
$$|\omega(t)|\le|Y_N(r)|e^{\Real\,\Lambda(r) \rho t}+|\omega^*|$$
\end{theorem}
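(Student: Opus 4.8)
The plan is to obtain the bound directly from the closed‑form expression for $\omega(t)$ established in the preceding corollary, namely $\omega(t)=Y_N(r)e^{\overline\Lambda(r)\rho t}-\omega^*$, by applying the elementwise triangle inequality and then estimating the complex exponentials. No new analytic input is needed beyond the corollary and Lemma~\ref{l35}; the argument is essentially a careful piece of bookkeeping with the block structure of the eigendecomposition.

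First I would apply the triangle inequality componentwise to the difference $Y_N(r)e^{\overline\Lambda(r)\rho t}-\omega^*$, which gives $|\omega(t)|\le|Y_N(r)e^{\overline\Lambda(r)\rho t}|+|\omega^*|$. For the first term I would invoke the triangle inequality for the matrix–vector product — recalling that $e^{\overline\Lambda(r)\rho t}$ denotes the vector with entries $e^{\overline\lambda_k(r)t}$ — so that $|Y_N(r)e^{\overline\Lambda(r)\rho t}|\le|Y_N(r)|\,|e^{\overline\Lambda(r)\rho t}|$, where both $|\cdot|$ are taken elementwise and the right‑hand side is an ordinary matrix–vector product of nonnegative quantities.

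Next I would evaluate $|e^{\overline\Lambda(r)\rho t}|$ componentwise: for an eigenvalue $\overline\lambda_k=a_k+ib_k$ and $t\ge0$ one has $|e^{\overline\lambda_k t}|=e^{a_k t}=e^{\Real\,\overline\lambda_k\,t}$, hence $|e^{\overline\Lambda(r)\rho t}|=e^{\Real\,\overline\Lambda(r)\rho t}$. Finally, to replace $\overline\Lambda(r)$ by $\Lambda(r)$ in the exponent I would use the block structure from Lemma~\ref{l35}: the rows of $S(r)$ corresponding to $\omega$ vanish in the columns associated with the zero eigenvalues, so the columns of $Y_N(r)=(S(r)\overline\Lambda(r)\diag(S^{-1}(r)P^D))_N$ that would multiply the $-I$ block of $\overline\Lambda(r)$ are zero, while on the remaining columns $\overline\Lambda(r)$ and $\Lambda(r)$ coincide (both equal $\Lambda^1(r)$). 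Therefore $|Y_N(r)|e^{\Real\,\overline\Lambda(r)\rho t}=|Y_N(r)|e^{\Real\,\Lambda(r)\rho t}$, and chaining the inequalities gives $|\omega(t)|\le|Y_N(r)|e^{\Real\,\Lambda(r)\rho t}+|\omega^*|$.

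The two triangle‑inequality steps are routine. The only point requiring attention is the last substitution of $\Lambda(r)$ for $\overline\Lambda(r)$ in the exponent: it must be justified by the vanishing of the corresponding columns of $Y_N(r)$, not by an inequality such as $e^{-t}\le1$, since otherwise the estimate would change. I expect this reconciliation of the two eigenvalue matrices to be the main (and essentially the only) obstacle in the proof.
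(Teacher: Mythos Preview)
Your proposal is correct and follows essentially the same route as the paper: both start from the closed form $\omega(t)=Y_N(r)e^{\overline\Lambda(r)\rho t}-\omega^*$ of the preceding corollary, apply the elementwise triangle inequality, and use $|e^{\lambda t}|=e^{\Real\lambda\,t}$. If anything you are more careful than the paper, which silently writes $\Lambda(r)$ in place of $\overline\Lambda(r)$ without spelling out the block-structure justification you give in your last paragraph.
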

\begin{proof}
From
$$|\omega(t)-\omega^*|=Y_N(r) e^{ \Lambda(r)\rho t}+\omega^*-\omega^*=|Y_N(r) e^{ \Lambda(r) \rho t}|\le|Y_N(r)|e^{\Real  \Lambda(r) \rho t}.$$
We have
$$|\omega(t)-\omega^*|\le M^1(t,r)=|Y_N(r)|e^{\Real\Lambda(r) \rho t}$$
which gives theorem result.
\end{proof}

\begin{corollary}
$$|\omega(t)|\le|Y_N(r)|e^{\Real\,\Lambda(r) \rho t}+|\omega^*|.$$
\end{corollary}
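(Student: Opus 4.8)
The plan is to derive the corollary in one line from Theorem~\ref{t41} by an elementwise triangle inequality; there is essentially no analytic work left to do, since the theorem already carries the estimate $|e^{\Lambda(r)\rho t}|\le e^{\Real\,\Lambda(r)\rho t}$. Recall from the notation subsection that for vectors the operations $|\cdot|$ and $\Real$ act componentwise, so every inequality below is to be read componentwise and $e^{\Real\,\Lambda(r)\rho t}$ denotes the vector whose entries are $e^{(\Real\,\lambda_l(r))\,t}$.

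First I would split $\omega(t)=\bigl(\omega(t)-\omega^*\bigr)+\omega^*$ and apply $|a+b|\le|a|+|b|$ componentwise to obtain $|\omega(t)|\le|\omega(t)-\omega^*|+|\omega^*|$. Then I would substitute the bound established inside the proof of Theorem~\ref{t41}, namely $|\omega(t)-\omega^*|\le M^1(t,r)=|Y_N(r)|\,e^{\Real\,\Lambda(r)\rho t}$, which itself rests on the corollary following Lemma~\ref{l35} (the closed form $\omega(t)=Y_N(r)e^{\overline\Lambda(r)\rho t}-\omega^*$) together with the elementwise inequality $|e^{\Lambda(r)\rho t}|\le e^{\Real\,\Lambda(r)\rho t}$. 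Combining the two displays yields $|\omega(t)|\le|Y_N(r)|\,e^{\Real\,\Lambda(r)\rho t}+|\omega^*|$, which is exactly the assertion.

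The only point worth pinning down — more a matter of notation than a genuine obstacle — is the interplay between $\overline\Lambda(r)$ and $\Lambda(r)$: the closed form for $\omega(t)$ is written with $\overline\Lambda(r)$, whose lower block was replaced by $-I$, yet by Lemma~\ref{l35} the frequency subvector is insensitive to that block, so on the components indexing $Y_N$ the relevant exponents coincide with $\Real\,\Lambda(r)\rho t$. I would therefore make explicit which eigenvalue block $Y_N(r)$ ranges over, so that the exponential factor in the final bound is unambiguous; with that fixed, the corollary is immediate from Theorem~\ref{t41}.
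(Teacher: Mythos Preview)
Your proposal is correct and matches the paper's (implicit) approach. In the paper the corollary is stated without proof, and indeed its statement is identical to that of Theorem~\ref{t41}; the theorem's proof establishes $|\omega(t)-\omega^*|\le M^1(t,r)=|Y_N(r)|e^{\Real\,\Lambda(r)\rho t}$ and then simply says ``which gives theorem result,'' leaving the componentwise triangle inequality $|\omega(t)|\le|\omega(t)-\omega^*|+|\omega^*|$ unstated --- precisely the step you supply.
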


\begin{theorem}\label{t42}
For the frequency deviation $\omega(t)$ in (\ref{meq}) we have the following estimation:
$$|\omega_l(t)|\le M_l^2(t,r)=\sum_{j=1}^{3n+m}|\Imag y_{lj}(r)|e^{\Real \lambda_j(r)t}\min\{|\Imag \lambda_j(r)t|,1\}+$$
$$+\sum_{j=1}^{3n+m}|\Real y_{lj}(r)|\min\left\{\left|\frac{d}{dt}f_j(t^1_j,r)\right|t,|f_j(t_j^0,r)-1|\right\},$$
where
\begin{equation*}
    f_j(t,r)=e^{\Real \lambda_j(r)t}\cos(\Imag \lambda_j(r)t)-1,
\end{equation*}
\begin{equation*}
    t_j^0=\left\{\begin{array}{cc}
    \frac{\pi}{|\Imag\lambda_j(r)|}+\frac{1}{\Imag\lambda_j(r)}\arctan{\frac{\Real \lambda_j(r)}{\Imag \lambda_j(r)}},&\mbox{if }\lambda_j(r)\ne0,\\
    0&\mbox{otherwise,}\end{array}\right.
\end{equation*} 
\begin{equation*}
    t^1_j=\left\{\begin{array}{cc}
    0,&\mbox{if }\Real \lambda_j(r)\ne0\mbox{ or }\Imag \lambda_j(r) = 0,\\
    \frac{\pi}{|\Imag\lambda_j(r)|}+\frac{1}{|\Imag\lambda_j(r)|}\arctan\left(\frac{(\Real \lambda_j(r))^2-(\Imag \lambda_j(r))^2}{2\Real \lambda_j(r)\Imag \lambda_j(r)}\right),
    &\mbox{if }(\Real \lambda_j(r)\ne0\-\Imag \lambda_j(r))\Imag \lambda_j(r)\ge0,\\
    \frac{1}{|\Imag\lambda_j(r)|}\arctan\left(\frac{(\Real \lambda_j(r))^2-(\Imag \lambda_j(r))^2}{2\Real \lambda_j(r)\Imag \lambda_j(r)}\right),
    &\mbox{if }(\Real \lambda_j(r)\ne0\-\Imag \lambda_j(r))\Imag \lambda_j(r)<0,
    \end{array}\right.
\end{equation*}
\end{theorem}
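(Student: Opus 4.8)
The plan is to expand $\omega_l$ over the eigenbasis of $A(r)$, separate real and imaginary parts, and then bound each scalar factor by the two elementary estimates that appear inside $M_l^2$. Throughout, let $f_j'$ and $f_j''$ denote the first and second $t$-derivatives of $f_j(t,r)$.

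First I would invoke the Corollary: $\omega(t)=Y_N(r)e^{\overline\Lambda(r)\rho t}-\omega^*$, and since $\omega(0)=0$ this forces $\omega^*=Y_N(r)\rho$, hence componentwise
\begin{equation*}
\omega_l(t)=\sum_{j=1}^{3n+m}y_{lj}(r)\bigl(e^{\lambda_j(r)t}-1\bigr),
\end{equation*}
the zero eigenvalues contributing nothing. Using $e^{\lambda_j t}=e^{\Real\lambda_j t}\bigl(\cos(\Imag\lambda_j t)+i\sin(\Imag\lambda_j t)\bigr)$ one gets $\Real(e^{\lambda_j t}-1)=f_j(t,r)$ and $\Imag(e^{\lambda_j t}-1)=e^{\Real\lambda_j t}\sin(\Imag\lambda_j t)$, and since $\omega_l(t)$ is real,
\begin{equation*}
\omega_l(t)=\sum_{j=1}^{3n+m}\Bigl(\Real y_{lj}\cdot f_j(t,r)-\Imag y_{lj}\cdot e^{\Real\lambda_j t}\sin(\Imag\lambda_j t)\Bigr).
\end{equation*}
The triangle inequality then gives $|\omega_l(t)|\le\sum_j|\Imag y_{lj}|\,e^{\Real\lambda_j t}|\sin(\Imag\lambda_j t)|+\sum_j|\Real y_{lj}|\,|f_j(t,r)|$, which already has the shape of $M_l^2(t,r)$; it remains to bound the two scalar factors $e^{\Real\lambda_j t}|\sin(\Imag\lambda_j t)|$ and $|f_j(t,r)|$ by the corresponding bracketed minima.

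For the sine factor I would simply use $|\sin x|\le\min\{|x|,1\}$, giving $e^{\Real\lambda_j t}|\sin(\Imag\lambda_j t)|\le e^{\Real\lambda_j t}\min\{|\Imag\lambda_j t|,1\}$, which is the first sum of $M_l^2$. For $|f_j(t,r)|$ I would prove two independent bounds and take their minimum. The linear-in-$t$ bound is the fundamental theorem of calculus: $f_j(0,r)=0$, so $|f_j(t,r)|=\bigl|\int_0^t f_j'(s,r)\,ds\bigr|\le t\sup_{s\ge 0}|f_j'(s,r)|$; here $f_j'(t,r)=e^{\Real\lambda_j t}\bigl(\Real\lambda_j\cos\Imag\lambda_j t-\Imag\lambda_j\sin\Imag\lambda_j t\bigr)$, the supremum is finite precisely because $\Real\lambda_j\le 0$ (this is where the stability constraint (\ref{stab}) is needed), and it is attained either at $t=0$ or at the first positive root of $f_j''(t,r)=0$, equivalently $\tan(\Imag\lambda_j t)=\frac{(\Real\lambda_j)^2-(\Imag\lambda_j)^2}{2\Real\lambda_j\Imag\lambda_j}$; deciding which alternative holds, and choosing the right $\arctan$ branch, produces $t_j^1$, whence $|f_j(t,r)|\le|f_j'(t_j^1,r)|\,t$. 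The constant bound uses $\Real\lambda_j\le 0$ again: then $e^{\Real\lambda_j t}\cos\Imag\lambda_j t\le 1$, so $|f_j(t,r)|=1-e^{\Real\lambda_j t}\cos\Imag\lambda_j t$, which is largest when $e^{\Real\lambda_j t}\cos\Imag\lambda_j t$ is smallest on $[0,\infty)$; the critical points of $f_j$ solve $\tan(\Imag\lambda_j t)=\Real\lambda_j/\Imag\lambda_j$, and among the critical values the minimum occurs at the first one with negative cosine, namely $t_j^0$, so $|f_j(t,r)|\le\max_{s\ge 0}|f_j(s,r)|=|f_j(t_j^0,r)|$. Substituting the sine bound and $|f_j(t,r)|\le\min\{|f_j'(t_j^1,r)|\,t,\;|f_j(t_j^0,r)|\}$ into the expansion of $|\omega_l(t)|$ yields $|\omega_l(t)|\le M_l^2(t,r)$.

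The calculus above is routine; the delicate part --- and the main obstacle --- is checking that the explicit closed forms of $t_j^0$ and $t_j^1$, with their $\arctan$ branches and sign-condition case splits, genuinely locate the global extrema of $|f_j|$ and $|f_j'|$ on $[0,\infty)$ and not merely local ones. That is an elementary but error-prone sign analysis in the half-plane $\Real\lambda_j\le 0$: one must treat $\Imag\lambda_j>0$, $\Imag\lambda_j<0$, the degenerate line $\Imag\lambda_j=0$ (where $f_j(t,r)=e^{\Real\lambda_j t}-1$ is monotone and no oscillatory argument is needed) and the point $\lambda_j=0$ separately, and, for $t_j^1$, compare $|\Real\lambda_j|$ with $|\Imag\lambda_j|$, which is what decides whether the maximum of $|f_j'|$ sits at $t=0$ or at an interior critical point. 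I would also reconcile the minor slips in the statement along the way --- in particular reading the constant bound as $|f_j(t_j^0,r)|=\max_{s\ge 0}|f_j(s,r)|$ (the displayed ``$-1$'' only enlarges it, so the inequality still holds) and fixing the sign handling in the $t_j^1$ formula when $\Imag\lambda_j<0$.
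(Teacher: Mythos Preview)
Your approach is essentially the paper's own: split $\omega_l(t)$ into its sine and cosine parts via the eigenexpansion from the Corollary, bound the sine factor by $|\sin x|\le\min\{|x|,1\}$, and bound each $|f_j(t,r)|$ by the minimum of the integral/Lipschitz estimate $t\cdot\max_{\tau\ge0}|f_j'(\tau,r)|$ and the global maximum $|f_j(t_j^0,r)-1|$. Your treatment is in fact slightly more careful than the paper's, which simply \emph{states} the closed forms of $t_j^0,t_j^1$ without the branch/sign verification you flag, and does not comment on the ``$-1$'' offset (your observation that it only enlarges the constant bound is correct, since $\Real\lambda_j\le0$ forces $f_j\le0$ and hence $|f_j-1|=|f_j|+1$).
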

\begin{proof}
We can use the following notation
$$\omega(t)=\omega^{\sin}(t)+\omega^{\cos}(t),$$
where
$$\omega^{\sin}(t)=Im(Y_N(r))e^{\Real\Lambda(r)t}\sin{(\Imag\Lambda(r)t)}\rho,\;\omega^{\cos}(t)=Re(Y_N(r))e^{\Real\Lambda(r)t}\cos{(\Imag\Lambda(r)t)}\rho+\omega^*.$$
We will approximate each of this function separately.
$$|\omega^{\sin}_l(t)|=\left|\sum_{j=1}^{3n+m}\Imag y_{lj}(r)e^{\Real\lambda_j(r)t}\sin{(\Imag\lambda_j(r)t)}\right|\le\sum_{j=1}^{3n+m}|\Imag y_{lj}(r)|e^{\Real\lambda_j(r)t}|\sin{(\Imag\lambda_j(r)t)}|\le$$
$$\le\sum_{j=1}^{3n+m}|y_{lj}(r)|e^{\Real\lambda_j(r)t}\min\{|\Imag\lambda_j(r)t|,1\}.$$

To approximate $\omega_c(t)$ we will use the following expression:
$$|\omega^{\cos}_l(t)|=\left|\sum_{j=1}^{3n+m}\Real y_{lj}f_j(t,r)\right|,$$
Since $\omega(0)=0$, and $\omega^{\sin}(0)=0$, we have $\omega^{\cos}(0)=0$.

Let $t_j^0$ be a solution of the problem
$$\max_{\tau>0}|f_j(\tau,r)-1|,$$
then
\begin{equation*}
    t_j^0=\left\{\begin{array}{cc}
    \frac{\pi}{|\Imag\lambda_j(r)|}+\frac{1}{\Imag\lambda_j(r)}\arctan{\frac{\Real \lambda_j(r)}{\Imag \lambda_j(r)}},&\mbox{if }\lambda_j(r)\ne0,\\
    0&\mbox{otherwise,}\end{array}\right.
\end{equation*} 
Let $t^1_j$ be the solution of the problem
$$\max_{\tau\ge0}\left|f'_j(\tau,r)\right|,$$
then
\begin{equation*}
    t^1_j=\left\{\begin{array}{cc}
    0,&\mbox{if }\Real \lambda_j(r)\ne0\mbox{ or }\Imag \lambda_j(r) = 0,\\
    \frac{\pi}{|\Imag\lambda_j(r)|}+\frac{1}{|\Imag\lambda_j(r)|}\arctan\left(\frac{(\Real \lambda_j(r))^2-(\Imag \lambda_j(r))^2}{2\Real \lambda_j(r)\Imag \lambda_j(r)}\right),
    &\mbox{if }(\Real \lambda_j(r)\ne0\-\Imag \lambda_j(r))\Imag \lambda_j(r)\ge0,\\
    \frac{1}{|\Imag\lambda_j(r)|}\arctan\left(\frac{(\Real \lambda_j(r))^2-(\Imag \lambda_j(r))^2}{2\Real \lambda_j(r)\Imag \lambda_j(r)}\right),
    &\mbox{if }(\Real \lambda_j(r)\ne0\-\Imag \lambda_j(r))\Imag \lambda_j(r)<0,
    \end{array}\right.
\end{equation*}
Then we have the following estimation
$$|\omega_c(t)|=\left|\int_0^t\frac{d}{dt}\left(\sum_{j=1}^{3n+m}\Real y_{lj}(r)f_j(\eta,r)d\eta\right)\right|\le
\sum_{j=1}^{3n+m}|\Real y_{lj}(r)|\left|\int_0^tf'_j(\eta,r)d\eta\right|=$$
$$=\sum_{j=1}^{3n+m}|\Real y_{lj}(r)|\min\left\{\int_0^t\left|f'_j(\eta,r)\right|d\eta,\left|\int_0^tf'_j(\eta,r)d\eta\right|\right\}=$$
$$=\sum_{j=1}^{3n+m}|\Real y_{lj}(r)|\min\left\{\int_0^t\max_{\tau\ge0}\left|f'_j(\tau,r)\right|d\eta,\left|f_j(t^0_j,r)-1\right|\right\}\le$$
$$\le\sum_{j=1}^{3n+m}|\Real y_{lj}(r)|\min\left\{\max_{\tau\ge0}\left|f'_j(\tau,r)\right|t,\left|f_j(t^0_j,r)-1\right|\right\}\le$$
$$\le\sum_{j=1}^{3n+m}|\Real y_{lj}(r)|\min\left\{\left|f'_j(t^1_j,r)\right|t,|f_j(t_j^0,r)-1|\right\}.$$
\end{proof}

\begin{corollary}\label{MC}
$$|\omega(t)|\le M(t,r)=\min\{M^1(t,r),M^2(t,r)\},$$
where
$$M^2(t,r)=(M_1^2(t,r),\dots,M_n^2(t,r))^T,\;M(t,r)=(M_1(t,r),\dots,M_n(t,r))^T,$$
\end{corollary}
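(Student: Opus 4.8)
The plan is simply to combine the two bounds already established in Theorems \ref{t41} and \ref{t42}, reading all absolute values, exponentials and the $\min$ elementwise. First I would recall that Theorem \ref{t41} (together with its corollary) gives, componentwise in $l\in N$,
$$|\omega_l(t)|\le M^1_l(t,r),\qquad M^1(t,r)=|Y_N(r)|e^{\Real\,\Lambda(r)\rho t}+|\omega^*|,$$
so that $M^1(t,r)=(M^1_1(t,r),\dots,M^1_n(t,r))^T$ is a genuine vector-valued upper envelope for $|\omega(t)|$. Next I would recall that Theorem \ref{t42} gives, again componentwise, $|\omega_l(t)|\le M^2_l(t,r)$ with $M^2_l$ the explicit sum written there; stacking these over $l$ yields the vector inequality $|\omega(t)|\le M^2(t,r)$.

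The second step is the elementary observation that if a nonnegative quantity lies below two numbers, it lies below their minimum. Concretely, fix $t\ge 0$, $r\ge 0$ and a bus index $l$. Since $|\omega_l(t)|\le M^1_l(t,r)$ and $|\omega_l(t)|\le M^2_l(t,r)$ hold simultaneously, we get
$$|\omega_l(t)|\le\min\{M^1_l(t,r),M^2_l(t,r)\}=M_l(t,r).$$
Collecting these $n$ scalar inequalities into one vector inequality gives $|\omega(t)|\le M(t,r)=\min\{M^1(t,r),M^2(t,r)\}$ with $M(t,r)=(M_1(t,r),\dots,M_n(t,r))^T$, which is exactly the claim. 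If one wishes, one can also remark that taking the pointwise minimum only sharpens the estimate, i.e. $M(t,r)\le M^1(t,r)$ and $M(t,r)\le M^2(t,r)$ elementwise, which is why it is the bound used in the sequel for defining $G(r)$.

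I do not expect any real obstacle here: the statement is a direct corollary and the only thing that needs care is the bookkeeping of the elementwise conventions (matrix $|Y_N(r)|$, the vector $e^{\Real\,\Lambda(r)\rho t}$, and the componentwise $\min$), which were already fixed in the notation subsection. The nontrivial work — the actual majorization of the oscillatory terms $\omega^{\sin}$ and $\omega^{\cos}$, and the closed-form maximizers $t^0_j$, $t^1_j$ — has already been discharged in Theorems \ref{t41} and \ref{t42}, so nothing further is required.
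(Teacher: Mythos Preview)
Your proposal is correct and matches the paper's treatment: the paper states Corollary \ref{MC} without proof, treating it as an immediate consequence of Theorems \ref{t41} and \ref{t42}, and your argument --- that a quantity bounded by two numbers is bounded by their minimum, applied componentwise --- is precisely the intended justification. The only caveat is notational: in the paper $M^1(t,r)$ is introduced in the proof of Theorem \ref{t41} as the bound on $|\omega(t)-\omega^*|$ rather than on $|\omega(t)|$, so strictly speaking one should either read $M^1$ as including the $|\omega^*|$ shift (as you do) or interpret the corollary as bounding $|\omega(t)-\omega^*|$; either way your reasoning goes through unchanged.
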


Similarly to nadir $F(r)$, estimate of the nadir is denoted as
$$G(r)=\max_{l\in N}\max_{t\in[0,t_1]}M_l(t,r).$$
\section{Minimization of the nadir}\label{mn}
\subsection{Calculations of values $F(r)$ and $G(r)$}
In practice each functions $M_l(t,r),\;l\in N$ have unique maximum. Taking this observation into consideration, we assume, that $G(r)$ can be found using golden section algorithm.

For nadir calculation d.c. approximation of frequency deviations is used. Functions $\omega_l(t)$ can be represented in the following way:
$$\omega_l(t)=h_l(t)-q_l(t),\;l\in N,$$
where
$$h_l(t) = \omega_l(t)+\frac{1}{2}k_l(r)t^2,\;q_l(t)=\frac{1}{2}k_l(r)t^2.$$
Here $k_l$ is obtained as follows.
$$\frac{d^2}{dt^2}\omega_l(t)=\sum_{j=1}^{3n+m}
e^{\Real\,\lambda_j(r)t}((\Real\,\lambda_j(r))^2\Real\,y_{lj}(r)-$$$$-2\Real\,\lambda_j(r)\Imag\,\lambda_j(r)\Imag\,y_{lj}(r)-(\Imag\,\lambda_j(r))^2\Real\,y_{lj}(r))\cos\Imag\,\lambda_j(r)t+$$
$$\left.((\Imag\,\lambda_j(r))^2\Imag\,y_{lj}(r)-2\Real\,\lambda_j(r)\Imag\,\lambda_j(r)\Real\,y_{lj}(r)-(\Real\,\lambda_j)^2\Imag\,y_{lj}(r))\sin\Real\,\lambda_j(r)t\right)\le$$
$$\le\sum_{j=1}^{3n+m}\left(((\Real\,\lambda_j(r))^2\Real\,y_{lj}(r)-2\Real\,\lambda_j(r)\Imag\,\lambda_j(r)\Imag\,y_{lj}(r)-(\Imag\,\lambda_j(r))^2\Real\,y_{lj}(r))^2+\right.$$
$$\left.+((\Imag\,\lambda_j(r))^2\Imag\,y_{lj}(r)-2\Real\,\lambda_j(r)\Imag\,\lambda_j(r)\Real\,y_{lj}(r)-(\Real\,\lambda_j(r))^2\Imag\,y_{lj}(r))\right)^\frac{1}{2}=k_l(r).$$
Global maximum of each function $|\omega_l(t)|,\;l\in N$ is obtained using branch and bound method with concave overestimators and d.c. approximation. Convergence of the method is given in \cite{HT}. Based on this optimization process values of $F(r)$ are found.

\subsection{Optimization of $F(r)$ and $G(r)$}

Although it is known \cite{T}, that eigenvalues and eigenvectors are continuous functions of matrix entries, they cannot be calculated analytically. Therefore Hooke and Jeeves algorithm \cite{BSS} is used optimize both function $F(r)$ and $G(r)$ subject to the following constraints:
\begin{enumerate}
  \item Parameters $r$ must be non-negative.
  \item Parameters $r$ have to be chosen so, that system of differential equations will remain stable.
  \begin{equation}\label{st_con}
  \max_{l\in N}\lambda_loverestimators=0.
  \end{equation}
  \item System's matrix does not have purely imaginary eigenvalues.
  \item Oscillation of the frequency oscillations must decrease at reasonable speed. There is no standardized constraints on the frequency oscillations decreases, therefore here the following constraint is used:
  \begin{equation}
  \max_{l\in N}\left|\frac{\Real\,\lambda_l(r)}{\Imag\,\lambda_l(r)}\right|\ge\xi,\;\xi>0.
  \end{equation}
\end{enumerate}
If those constraints are violated, we take
$$F(r)=G(r)=\infty.$$
Points found by Hooke and Jeeves algorithm are denoted by $r^F$ and $r^G$.
\subsection{Optimization algorithms}
Optimization algorithm for $F(r)$ has the following form:
\begin{alg} Minimization of $F(r)$.
\begin{enumerate}
\item Currently applied (default) in the optimized power system vector of control parameters $r^0$ is taken as a starting point.
\item Hooke and Jeeves method is applied to optimize $F(r)$. Each value of $F(r)$ is calculated via d.c. optimization.
\end{enumerate}
\end{alg}
Algorithm for optimization $G(r)$ has the same form with only difference in usage of golden section method for calculation of values $G(r)$ instead of d.c. optimization. It is given below:
\begin{alg} Minimization of $G(r)$.
\begin{enumerate}
\item Currently applied (default) in the optimized power system vector of control parameters $r^0$ is taken as a starting point.
\item Hooke and Jeeves method is applied to optimize $F(r)$. Each value of $F(r)$ is calculated via golden section method.
\end{enumerate}
\end{alg}
\section{Numerical results}\label{num_res}
The algorithms were coded in Matlab. Computations were made in PC with Intel Core i7 /2.4GHz / 16GB. Numerical results are presented in the table \ref{sample-table}. Here column System shows for which power system experiments were held, $\gamma$ is taken $0.01$. Two systems are considered \cite{PST}: 3 generators system and system of New England, that consists of 10 generators. For each system $100$ test with different vectors of disturbances. Averaged results are presented in the table. Column Optimization time represents time in seconds, required for the Hooke and Jeeves method, applied to functions $F$ and $G$. Next column contains number of function ($F$ or $G$) calculations required for the method. Last column contains values of function $F$ (maximal absolute value of frequency deviations) at the starting point, after optimization of $F$ and after optimization of $G$. Although in the last case function $G$ is optimized, aim of the algorithm is to minimize maximal frequency deviations, therefore values of $F$ are provided. Parameters in starting point $r$ are taken from \cite{PST}.

Time, required for optimization of $F$ is bigger than for $G$, due to the fact that, during every calculation of $F$ algorithm has to solve $n$ d.c. optimization problems, while during calculation of $F$ golden section computations are required.
As can be seen from the table, optimization of $G$ might give better set of parameters, than optimization of $F$. Moreover less calculations of the function are required. This effect can be seen in figures \ref{maj_func} and \ref{exact_func}. Here New England system is considered, all parameters $r_l$ are frozen with the exception of the first two. Red points on figures \ref{maj_func} and \ref{exact_func} represent $G(r^G)$ and $F(r^G)$ respectively, green point on figure \ref{exact_func} represents $G(r^G)$. In numerical experiments $F(r^G)$ is global optimum of $F$, so optimization of majorants gives better result, than optimization of maximal values of frequency deviations directly. This happens due to the fact, that function $G$ is smoother. Consider subregion, containing both points $G(r^G)$ and $F(r^F)$ on figures \ref{maj_func_part} and \ref{exact_func_part}. Function $F$ in this case have unique minimum $F(r^F)$, while $G$ has local minimums, and $G(r^G)$ is one of them.

Dynamics of the frequencies and majorant for New England System are given in figures \ref{new_englad_default}, \ref{new_england_maj}, \ref{new_england_exac} and \ref{new_england_maj_no_suppr}. Figure \ref{new_englad_default} represents behavior of the system with starting values of the parameters. Figure \ref{new_england_maj} represents behavior of the system with starting values of the parameters. Figure \ref{new_england_exac} represents behavior of the system with starting values of the parameters. Figure \ref{new_england_maj_no_suppr} system dynamics, after optimization of $F$ without suppression of oscillations (\ref{oc_con}). As can be seen, here maximal frequency deviations are smaller, than in \ref{new_england_maj}, however they do not decay, during the observation time.

\begin{table}
\caption{Resutls of numerical experiments for 3 generators system and New England bus system.}
{\begin{tabular}{|c|C{1.7cm}|C{1.7cm}|C{1.2cm}|C{1.2cm}|C{1.2cm}|C{1.2cm}|C{1.2cm}|}
   \hline
    \multirow{2}{*}{System} & \multicolumn{2}{c|}{Optimization Time (seconds)} & \multicolumn{2}{c|}{Function calculations} & \multicolumn{3}{c|}{Nadir values}\\\cline{2-8}
     &$F$ &$G$ &$F$ &$G$ & $F(r^0)$ &$F(r^F)$ &$F(r^G)$\\\hline
    3 generators & 3.63 & 0.15 & 225 & 142 & 1.18 & 0.55 & 0.55 \\\hline
   New England & 20.7 & 1.5 & 225 & 142 & 2.67 & 1.55 & 1.45\\
   \hline
 \end{tabular}
}
\label{sample-table}
\end{table}

\begin{figure}
\begin{center}
\resizebox*{13cm}{!}{\includegraphics{./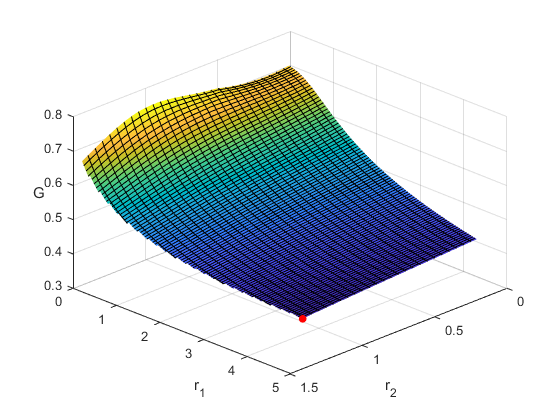}}
\caption{Function $G(r)$ for the New England system with $r_l,\;l=\overline{3,10}$ fixed. Red point represent value, found by the Hooke and Jeeves algorithm $G(r^G)$.} \label{maj_func}
\end{center}
\end{figure}

\begin{figure}
\begin{center}
\resizebox*{13cm}{!}{\includegraphics{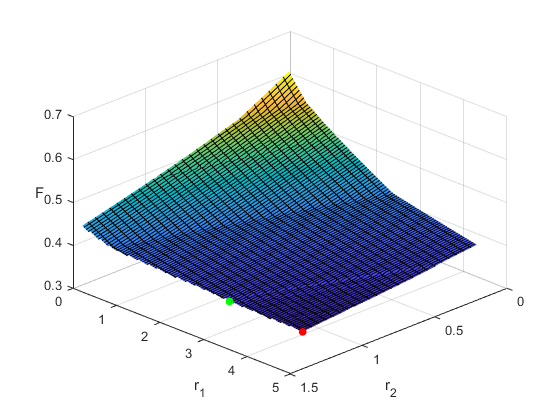}}
\caption{Function $F(r)$ for the New England system with $r_l,\;l=\overline{3,10}$ fixed. Green point represent value, found by the Hooke and Jeeves algorithm $F(r^F)$. Red point represents value $F(r^G)$.} \label{exact_func}
\end{center}
\end{figure}

\begin{figure}
\begin{center}
\resizebox*{13cm}{!}{\includegraphics{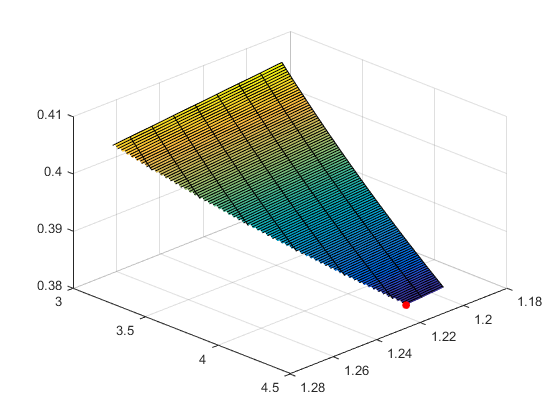}}
\caption{Function $G(r)$ for the New England system with $r_l,\;l=\overline{3,10}$ fixed. Subregion, containing optimal point. Red point represent value, found by the Hooke and Jeeves algorithm $G(r^G)$.} \label{maj_func_part}
\end{center}
\end{figure}
\begin{figure}
\begin{center}
\resizebox*{13cm}{!}{\includegraphics{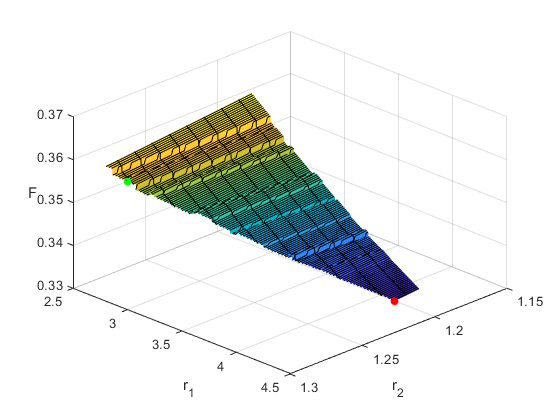}}
\caption{Function $F(r)$ for the New England system with $r_l,\;l=\overline{3,10}$ fixed. Subregion, containing optimal point. Green point represent value, found by the Hooke and Jeeves algorithm $F(r^F)$. Red point represents value $F(r^G)$.} \label{exact_func_part}
\end{center}
\end{figure}
\begin{figure}
\begin{center}
\resizebox*{13cm}{!}{\includegraphics{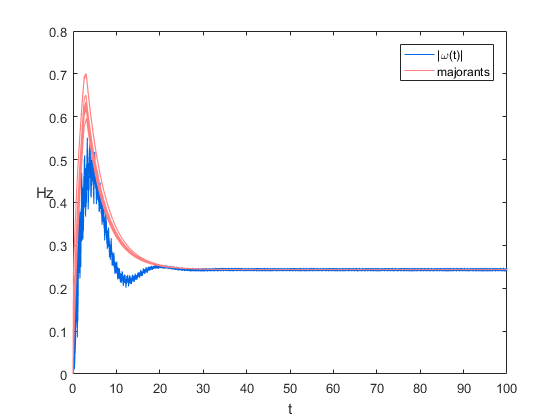}}
\caption{New England system. Absolute values of frequency deviations and corresponding majorants for starting values of $r$.} \label{new_englad_default}
\end{center}
\end{figure}
\begin{figure}
\begin{center}
\resizebox*{13cm}{!}{\includegraphics{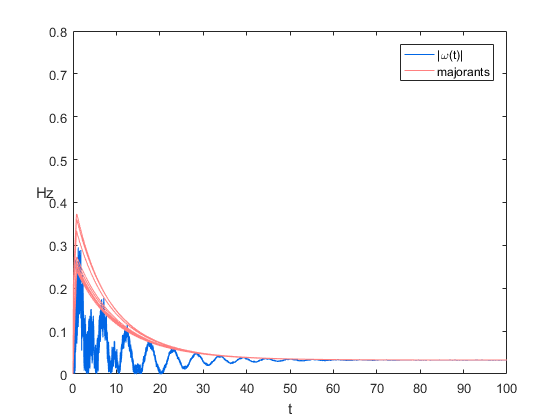}}
\caption{New England system. Absolute values of frequency deviations and corresponding majorants for $r^G$.} \label{new_england_maj}
\end{center}
\end{figure}

\begin{figure}
\begin{center}
\resizebox*{13cm}{!}{\includegraphics{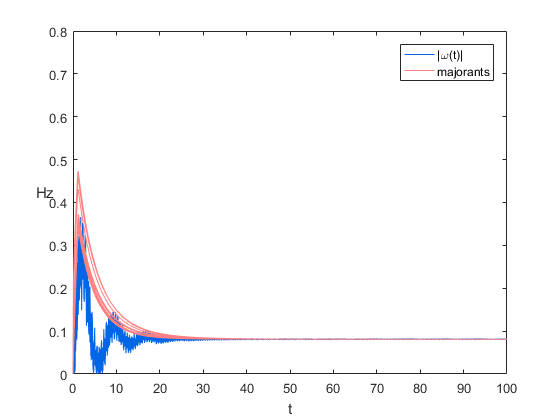}}
\caption{New England system. Absolute values of frequency deviations and corresponding majorants for $r^F$.} \label{new_england_exac}
\end{center}
\end{figure}
\begin{figure}
\begin{center}
\resizebox*{13cm}{!}{\includegraphics{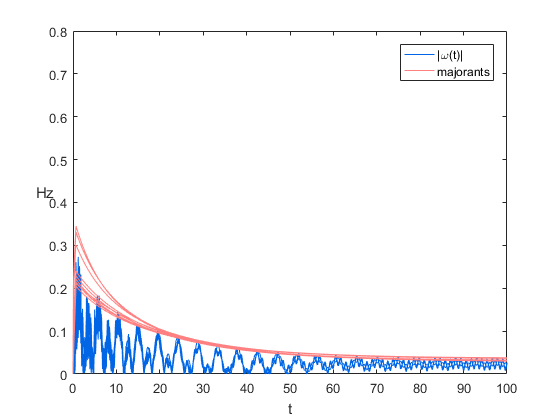}}
\caption{New England system. Absolute values of frequency deviations and corresponding majorants for $r^G$ without reduction of oscillations (\ref{oc_con}).} \label{new_england_maj_no_suppr}
\end{center}
\end{figure}

\section{Conclusion}\label{Conc}

The paper proposes method, that allows to minimize maximum of absolute values of frequency deviations (nadir) in power network under droop control. Control parameters are obtained by this method in a way, that ensures system's stability and good decay rate of the oscillations.

Since frequency oscillations are described by oscillatory functions with infinite number of local extremums, it is necessary to apply d.c. optimization method in order to find nadir for any particular set of control parameters, which is computationally expensive. Moreover nadir as a function of control parameters is non-smooth and have several local extremums. As a result it is not possible to ensure obtaining of global optimum.

Within this paper analytic majorants of frequency oscillations are derived. In practice they have unique maximum, therefore estimate of the nadir based on this majorants can be obtained via golden section, which is faster, than d.c. approach. Additionally, as numerical experiments show, this estimate as a function of control parameters have unique minimum which coincides with global minimum of the nadir.

Efficiency of nadir minimization and estimate minimization is compared. Unlike in nadir minimization, optimization algorithm applied to the estimate cannot finish its work in local minimum and always finds global optimum. Additionally minimization of estimate is less computationally difficult due to exclusion of d.c. optimization.

The obtained approach allows to reduce nadir up to two times compared with the default values of control parameters, used in the power networks. Moreover it keeps frequency oscillations within acceptable limits, which prevents wear of the equipment.

At the future expansion of this technique is planned in order to optimizer secondary frequency control together with the primary frequency control.

\end{document}